\documentclass[12pt,reqno]{amsart}

\usepackage[mathlines]{lineno}

\usepackage{times}
\usepackage{amsfonts}
\usepackage{amssymb}
\usepackage{latexsym}
\usepackage{xcolor}

\newtheorem{theorem}{Theorem}
\newtheorem{lemma}[theorem]{Lemma}

\newtheorem{definition}[theorem]{Definition}
\newtheorem{remark}{Remark}
\newtheorem{corollary}[theorem]{Corollary}
\newtheorem{example}{Example}

\begin{document}

\begin{center}
\Large{On boundedness of Hausdorff-type operators on  Sobolev spaces}
\end{center}

\

\centerline{A. R. Mirotin}

\centerline{amirotin@yandex.ru}

\

\textsc{Abstract.} A new notion of a Hausdorff-type  operator on  function spaces over domains in Euclidean spaces is introduced, and a sufficient condition for the boundedness of this operator on Sobolev spaces is proved. It is shown that this condition cannot be weakened in general.  
\

2020 Mathematics Subject Classification:  44A05, 44A30,   42B35,  47G10.

\

Key words and phrases. Hausdorff operator, Sobolev space, isometry, sharp conditions.

\

\section{Introduction}

Garabedian and Rogosinskii  introduced Hausdorff  operators on the finite interval   as a natural continuous analog of the Hausdorff summation method  (see \cite[Chapter XI]{H} and references therein).
The impetus
 for the modern development of this theory was given by the work of Liflyand and  M\'{o}ricz \cite{LM} where   Hausdorff  operators on the one-dimensional real Hardy space were considered.
 
 As a result of a certain development of this scientific direction the following notion of a multidimensional Hausdorff operator over the Euclidean spaces was introduced \cite{BM}, \cite{LL}:
 
 \begin{equation}\label{H_classic}
\mathcal{H}_{\Phi,A}f (x)=\int_{\mathbb{R}^n} \Phi(u)f(A_ux)du,
\end{equation}
 where $(A_u)$ stands for a family of non-singular $n\times n$ matrices, $x\in \mathbb{R}^n$ is a column vector, and $\Phi$ is some given measurable function on $\mathbb{R}^n$ (a kernel).
The first nontrivial
results in several dimensions due to Lerner and  Liflyand \cite{LL}, see also the survey   articles  \cite{Ls}, \cite{CFW}.

Later the generalizations of the aforementioned definition were given for locally compact  groups,   homogeneous and double coset spaces of such groups instead of the Euclidean space in \cite{MCAOT, JMAA, Nachr, homogen,  Lie, JOTH}. For the case of a disc 
 in $\mathbb{C}$  see, e.~g.,  \cite{MCAOT}, \cite{KM}, \cite{KGM}. The main observation which leads to these generalizations is that the mappings $x\mapsto Ax$ where $A\in {\rm GL}(n,\mathbb{R})$ form the whole group of automorphisms of the additive group $\mathbb{R}^n$. So, for any set $\Omega$ which is an object of some category one can define Hausdorff operators over $\Omega$ using   automorphisms of this category (see \cite{arx} for details). 

In this paper, guided by this idea we consider a domain   $\Omega\subseteq \mathbb{R}^n$ endowed with Euclidean metric and a family $(A(u))_{u\in T}\subseteq {\rm Iso}(\Omega)$ of surjective isometries of $\Omega$. Thus, we arrive to the following definition.

\begin{definition} 
Let $(T,\mu)$ be a measure space and  $(A(u))_{u\in T}\subseteq {\rm Iso}(\Omega)$ be a $\mu$-measurable family of surjective isometries of a domain  $\Omega\subseteq \mathbb{R}^n$  (with respect to the  compact-open topology in ${\rm Iso}(\Omega)$). Let $\Phi:T\to \mathbb{C}$ be a $\mu$-measurable function. Then for a $\mu$-measurable function $f:\Omega\to \mathbb{C}$ we put
\begin{equation}\label{H_mu}
\mathcal{H}_{\mu, \Phi,A}f (x):=\int_{T} \Phi(u)f(A(u)(x))d\mu(u)
\end{equation}
(the Lebesgue integral).
\end{definition}

The main result of the paper states that for ``good'' domains the condition $\Phi\in L^1(\mu)$ is sufficient for the boundedness of an operator $\mathcal{H}_{\mu, \Phi,A}$  in Sobolev spaces  $W^{1,p}(\Omega)$.  We show also that this condition (which is  obviously necessary for bounded $\Omega$)   cannot be weakened in general for unbounded domains as well.  So, this work may be considered as a contribution to the further development of the theory of Hausdorff operators.

\begin{remark}
{\rm
In the pioneering  work  \cite{ZhaoGuo}  by G. Zhao and W. Guo the boundedness of Hausdorff operators of the form \eqref{H_classic} on the Sobolev spaces  $W^{k,1}( \mathbb{R}^n)$ ($k\in \mathbb{N}$) was studied for the first time.  Our approach is  different  from the approach in \cite{ZhaoGuo} because

1) we consider spaces $W^{1,p}(\Omega)$ instead of $W^{k,1}(\mathbb{R}^n)$;

2) we consider general subdomains $\Omega$  in $\mathbb{R}^n$;

3) our definition of  
a Hausdorff operator differs from the  definition used in \cite{ZhaoGuo} (this is true even in the case of the space  $W^{1,1}(\mathbb{R}^n)$);

 4) our methods of proof  are  different from ones used in \cite{ZhaoGuo}.
}
\end{remark}

\section{ Main result}
\label{sec:sobolev}\setcounter{equation}{0}

We need some preparations to prove our main result.

The following lemma has shown itself to be a universal means of proving the boundedness of Hausdorff-type operators in various functional spaces, e.g., \cite{MCAOT, JMAA, Nachr, homogen,  Lie, JOTH}.

\begin{lemma}\label{lm1} \cite[Lemma 2]{JMAA} \textit{Let $(\Omega;\nu)$ be a measure space,
$\mathcal{F}(\Omega)$
 some Banach space of $\nu$-measurable functions on $\Omega$,  $(T,\mu)$ a $\sigma$-compact quasi-metric space with  positive Radon measure $\mu$, and $F(u, x)$ a function on $T\times \Omega$. Assume that}

(a) \textit{the convergence of a sequence in norm in $\mathcal{F}(\Omega)$ yields the convergence of some
subsequence to the same function for $\nu$-a.~e. $x\in \Omega$; }

(b)  \textit{$F(u, \cdot) \in \mathcal{F}(\Omega)$ for $\mu$-a.~e. $u\in T$;}

 (c) \textit{the map  $u\mapsto F(u, \cdot):T\to \mathcal{F}(\Omega)$ is Bochner integrable with respect to } $\mu$.

  \textit{Then for $\nu$-a.~e. $x\in  \Omega$ one has}
$$
\left((B)\int_T F(u, \cdot)d\mu(u)\right)(x)=
\int_T F(u, x)d\mu(u)
$$
($(B)$ stands for the Bochner integral in $\mathcal{F}(\Omega)$).
\end{lemma}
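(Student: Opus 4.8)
The plan is to reduce the asserted identity to the case of a simple $\mathcal{F}(\Omega)$-valued integrand, where it is elementary, and then to pass to the limit using hypothesis~(a). It is worth noting at the outset why (a) is the right hypothesis: it forces the canonical inclusion of $\mathcal{F}(\Omega)$ into the space $L^{0}(\Omega,\nu)$ of $\nu$-measurable functions, endowed with the topology of convergence in $\nu$-measure on sets of finite measure, to be continuous (otherwise a norm-null sequence in $\mathcal{F}(\Omega)$ would stay bounded away from $0$ in measure on some finite-measure set, contradicting the existence, guaranteed by (a), of a subsequence converging to $0$ $\nu$-a.e.\ there). Conceptually, this is why the Bochner integral, being an $\mathcal{F}(\Omega)$-limit of finite sums, is represented by the pointwise Lebesgue integral.

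By (c) and the construction of the Bochner integral there are $\mu$-simple functions $s_{k}\colon T\to\mathcal{F}(\Omega)$, say $s_{k}(u)=\sum_{j}\chi_{E_{j}^{k}}(u)\,g_{j}^{k}$ with $g_{j}^{k}\in\mathcal{F}(\Omega)$ and $\mu(E_{j}^{k})<\infty$, such that $\int_{T}\|s_{k}(u)-F(u,\cdot)\|_{\mathcal{F}(\Omega)}\,d\mu(u)\to0$; after passing to a subsequence and truncating in the usual way one may also require $s_{k}(u)\to F(u,\cdot)$ in $\mathcal{F}(\Omega)$ for $\mu$-a.e.\ $u$ and $\|s_{k}(u)\|_{\mathcal{F}(\Omega)}\le 2\|F(u,\cdot)\|_{\mathcal{F}(\Omega)}$. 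Put $g:=(B)\int_{T}F(u,\cdot)\,d\mu(u)$ and $h_{k}:=(B)\int_{T}s_{k}(u)\,d\mu(u)=\sum_{j}\mu(E_{j}^{k})\,g_{j}^{k}$. Then $\|h_{k}-g\|_{\mathcal{F}(\Omega)}\le\int_{T}\|s_{k}(u)-F(u,\cdot)\|_{\mathcal{F}(\Omega)}\,d\mu(u)\to0$, while for fixed representatives of the $g_{j}^{k}$ one has the trivial identity $h_{k}(x)=\sum_{j}\mu(E_{j}^{k})g_{j}^{k}(x)=\int_{T}s_{k}(u)(x)\,d\mu(u)$ for every $x\in\Omega$. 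By (a), after passing to a further subsequence, $h_{k}(x)\to g(x)$ for $\nu$-a.e.\ $x$.

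It remains to show that $\int_{T}s_{k}(u)(x)\,d\mu(u)\to\int_{T}F(u,x)\,d\mu(u)$ for $\nu$-a.e.\ $x$ (the right-hand side being absolutely convergent for $\nu$-a.e.\ such $x$), and interchanging this Lebesgue integral with the limit is the main obstacle. The cleanest route I see is to fix an exhaustion of $\Omega$ by sets $A$ of finite $\nu$-measure and to use Fubini's theorem — available since $\sigma$-compactness of $T$ makes $\mu$ $\sigma$-finite and since $F$ and the $s_{k}$ are jointly measurable on $T\times\Omega$ — to write
\[
\int_{A}\Big|\int_{T}\big(s_{k}(u)(x)-F(u,x)\big)\,d\mu(u)\Big|\,d\nu(x)\le\int_{T}\Big(\int_{A}|s_{k}(u)(x)-F(u,x)|\,d\nu(x)\Big)\,d\mu(u).
\]
When $\mathcal{F}(\Omega)$ embeds continuously into $L^{1}_{\mathrm{loc}}(\Omega,\nu)$ — which is the case for the spaces $W^{1,p}(\Omega)$ to which the lemma is applied below — the inner integral is at most $C_{A}\|s_{k}(u)-F(u,\cdot)\|_{\mathcal{F}(\Omega)}$, so the whole quantity is $\le C_{A}\int_{T}\|s_{k}(u)-F(u,\cdot)\|_{\mathcal{F}(\Omega)}\,d\mu(u)\to0$; a diagonal extraction over the exhausting sets $A$ then yields $\int_{T}s_{k}(u)(x)\,d\mu(u)\to\int_{T}F(u,x)\,d\mu(u)$ for $\nu$-a.e.\ $x$, which together with the preceding paragraph completes the argument. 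For a general Banach space $\mathcal{F}(\Omega)$ obeying only (a) one lacks such an $L^{1}$-bound, and the delicate point — where I expect the real difficulty, and where the quasi-metric, Radon and $\sigma$-compactness structure of $(T,\mu)$ must genuinely be used — is that (a) provides $\nu$-a.e.\ convergence of the slices $s_{k}(u)(\cdot)\to F(u,\cdot)$ only along subsequences possibly depending on $u$; I would reconcile this with the integration in $u$ by first thinning the subsequence so that $\sum_{k}\int_{T}\|s_{k}(u)-F(u,\cdot)\|_{\mathcal{F}(\Omega)}\,d\mu(u)<\infty$ and then invoking Lusin's theorem to take the $s_{k}$ to be Riemann-type sampling sums of the strongly measurable, hence $\mu$-almost continuous, map $u\mapsto F(u,\cdot)$, followed by a diagonal argument and dominated convergence in $u$.
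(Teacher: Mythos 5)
The paper does not actually prove this lemma: it is imported verbatim from \cite[Lemma 2]{JMAA}, so there is no in-paper argument to compare yours against. Judged on its own, your skeleton is the standard (and, as far as one can tell, the cited) route: approximate the Bochner-integrable map by simple functions $s_k$, note that for simple functions the Bochner integral evaluated at $x$ is literally the finite sum $\sum_j\mu(E_j^k)g_j^k(x)=\int_T s_k(u)(x)\,d\mu(u)$, use hypothesis (a) to upgrade the norm convergence $h_k\to g$ to $\nu$-a.e.\ convergence along a subsequence, and then pass to the limit in the scalar integrals. Your execution of the last step via the embedding $\mathcal{F}(\Omega)\hookrightarrow L^1_{\mathrm{loc}}(\Omega,\nu)$, Fubini--Tonelli on $A\times T$, and a diagonal extraction over an exhaustion is correct and complete, and it covers every use of the lemma in this paper, since $\|f\|_{L^1(A)}\le \nu(A)^{1/p'}\|f\|_{W^{1,p}(\Omega)}$ for $A\subseteq\Omega$ of finite measure. (Two small caveats you half-acknowledge: joint measurability of $F$ on $T\times\Omega$ is needed for Fubini but is not among the stated hypotheses, and the absolute convergence of $\int_T F(u,x)\,d\mu(u)$ for a.e.\ $x$ is itself a consequence of the same Tonelli estimate rather than something given.)

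The genuine gap is your final paragraph. For a general $\mathcal{F}(\Omega)$ satisfying only (a), the obstruction you correctly name --- that (a) yields a.e.\ convergence of $s_k(u)(\cdot)\to F(u,\cdot)$ only along a subsequence depending on $u$ --- is not removed by the devices you list. Thinning so that $\sum_k\int_T\|s_k(u)-F(u,\cdot)\|\,d\mu(u)<\infty$ gives fast \emph{norm} convergence for a.e.\ $u$, but (a) converts norm convergence into pointwise information only after a further, $u$-dependent extraction; Lusin/Riemann-sum sampling does not change this; and ``dominated convergence in $u$'' presupposes a pointwise-in-$x$ dominating function $u\mapsto G(u,x)\in L^1(\mu)$ that hypothesis (a) nowhere supplies. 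Indeed, (a) only guarantees a continuous embedding of $\mathcal{F}(\Omega)$ into $L^0$ (convergence in measure on finite-measure sets), and $L^0$ is not locally convex, so averaging over $T$ need not be continuous there; some additional structure (such as your $L^1_{\mathrm{loc}}$ embedding) really is being used. So what you have proved is a slightly weaker lemma than the one stated --- entirely adequate for this paper, but the last paragraph should either be replaced by an honest additional hypothesis or omitted.
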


We shall use also the next lemma for the proof of our main result.

\begin{lemma}\label{lem2}
 Each isometry $A$ of a domain $\Omega\subseteq \mathbb{R}^n$ preserves the Lebesgue measure.

\end{lemma}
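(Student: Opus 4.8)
The plan is to exploit the rigidity of Euclidean space: a surjective isometry of an open subset of $\mathbb R^n$ is rigid enough that it must preserve any purely metric quantity, in particular the one that recovers Lebesgue measure. I see two routes and would write up the first.

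\emph{Route 1 (via Hausdorff measure).} I would recall that the $n$-dimensional Hausdorff measure on $\mathbb R^n$ satisfies $\mathcal H^n(E)=c_n|E|$ for every Borel set $E$, with a fixed normalizing constant $c_n>0$. The key point is that $\mathcal H^n$ is built solely from diameters of countable covers, so the Hausdorff measure of a set depends only on the metric restricted to that set, not on the ambient space. Since $\Omega$ carries the restriction of the Euclidean distance and $A(\Omega)=\Omega$, the bijection $A$ sends any countable $\delta$-cover of a set $E\subseteq\Omega$ to a countable $\delta$-cover of $A(E)\subseteq\Omega$ with identical diameters (and back), so $\mathcal H^n_\delta(A(E))=\mathcal H^n_\delta(E)$ for all $\delta>0$; letting $\delta\to 0$ gives $\mathcal H^n(A(E))=\mathcal H^n(E)$, and dividing by $c_n$ yields $|A(E)|=|E|$. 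Measurability of $A(E)$ for Lebesgue-measurable $E$ follows the same way, since $A$ and $A^{-1}$ carry Borel sets to Borel sets and $\mathcal H^n$-null sets to $\mathcal H^n$-null sets.

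\emph{Route 2 (reduction to a rigid motion), kept as a more self-contained backup.} Here I would first show that $A$ is the restriction to $\Omega$ of a map $x\mapsto Ox+b$ with $O$ orthogonal, so that $|\det O|=1$ closes the argument. Fix $x_0\in\Omega$ and a ball $B=B(x_0,r)\subseteq\Omega$. Strict convexity of the Euclidean norm makes $(x+y)/2$ the unique point $z\in\mathbb R^n$ with $|z-x|=|z-y|=\tfrac12|x-y|$, and convexity of $B$ keeps this midpoint inside $B$; hence $A$ preserves midpoints of pairs in $B$. A continuous midpoint-preserving map on a convex set is affine, so $A|_B$ coincides with an affine map whose linear part is orthogonal because $A|_B$ is distance-preserving. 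Two affine isometries agreeing on a nonempty open set agree on all of $\mathbb R^n$, so by connectedness of $\Omega$ (covered by such balls) these local pieces glue to a single Euclidean motion, and the claim follows.

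The proof is short in either case; the only real issue is that ``isometry of $\Omega$'' is an a priori weak, non-smooth hypothesis, and the content lies in extracting from it either a metric invariant equivalent to Lebesgue measure (Route 1) or the global affine structure (Route 2, which is in the spirit of the Mazur--Ulam theorem but localized to cope with a mere domain). I would expect Route 1 to give the cleanest exposition.
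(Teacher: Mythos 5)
Your proposal is correct, but both of your routes differ genuinely from the paper's argument. The paper invokes Zelen\'y's theorem that the Dynkin system generated by closed balls contained in a domain exhausts all Borel (in particular compact) subsets, and then propagates the obvious measure-preservation on balls through monotone unions, monotone intersections and disjoint countable unions. Your Route 1 instead identifies Lebesgue measure with $n$-dimensional Hausdorff measure and uses the fact that $\mathcal H^n$ is an intrinsic metric invariant (computable from covers by subsets of the set itself, so independent of the ambient space); this is shorter, needs no structure theory of Borel sets in domains, and handles measurability of images cleanly via preservation of Borel and null sets. Your Route 2 proves more than the lemma asks: localizing the Mazur--Ulam midpoint argument to balls in $\Omega$ and gluing along the connected domain, you show every surjective isometry of a domain is the restriction of a global rigid motion $x\mapsto Ox+b$. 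That stronger structural fact would actually simplify other parts of the paper --- it makes the appeal to Myers--Steenrod for smoothness unnecessary and shows the Jacobian bound $C=1$ in Theorem \ref{Sobolev}(ii) holds automatically for any domain --- at the cost of a slightly longer write-up. Either route is sound; the only points worth spelling out carefully in Route 1 are the ambient-independence of $\mathcal H^n_\delta$ (replace a cover $\{U_i\}$ of $E$ by $\{U_i\cap E\}$, which does not increase diameters) and, in Route 2, that the unique metric midpoint of $A(x),A(y)$ in $\mathbb R^n$ must coincide with $A\bigl((x+y)/2\bigr)$ because the latter lies in $\Omega$ and realizes the defining distances.
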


\begin{proof} It follows from the main result in  \cite{Zeleny} (see  p. 433  therein) that
 each  compact subset $K$ of a domain $\Omega$ can be generated
from closed Euclidean  balls that contain in  $\Omega$  by countable monotone unions, countable monotone intersections,
and countable disjoint unions. Since an  isometry $A$ preserves  Lebesgue measures of balls in $\Omega$, it preserves  Lebesgue measures of countable monotone unions, countable monotone intersections,
and countable disjoint unions of such balls (due to the  continuity and to the sigma-additivity of a measure). As a consequence, it preserves the Lebesgue measure of any  compact subset $K$ of $\Omega$.
\end{proof}

\begin{corollary}
Let $\Phi\in L^1(\mu)$ and $1\le p<\infty$. Then the corresponding Hausdorff-type operator $\mathcal{H}_{\mu, \Phi,A}$  is bounded in the Lebesgue space $L^{p}=L^{p}(\Omega)$ and
$\|\mathcal{H}_{\mu, \Phi,A}\|_{L^p\to L^p}\le \|\Phi\|_{L^1(\mu)}$.
\end{corollary}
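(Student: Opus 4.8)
The plan is to reduce the assertion to the elementary fact, supplied by Lemma~\ref{lem2}, that each surjective isometry $A(u)$ of $\Omega$ is a Lebesgue–measure–preserving bijection of $\Omega$ onto itself, so that the composition operator $C_{A(u)}\colon f\mapsto f\circ A(u)$ is an isometry of $L^p(\Omega)$ for every $1\le p<\infty$. Indeed, $A(u)$ is surjective by hypothesis and injective because it is an isometry; since both $A(u)$ and $A(u)^{-1}$ are isometries of $\Omega$, they preserve the Lebesgue measure by Lemma~\ref{lem2}, and the change of variables $y=A(u)(x)$ gives $\int_\Omega|f(A(u)(x))|^p\,dx=\int_\Omega|f(y)|^p\,dy$, i.e.\ $\|C_{A(u)}f\|_{L^p}=\|f\|_{L^p}$ for all $f\in L^p(\Omega)$.

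Next I would fix $f\in L^p(\Omega)$ and set $F(u,\cdot):=\Phi(u)\,(f\circ A(u))=\Phi(u)\,C_{A(u)}f$. By the previous paragraph $F(u,\cdot)\in L^p(\Omega)$ for $\mu$-a.e.\ $u$, with $\|F(u,\cdot)\|_{L^p}=|\Phi(u)|\,\|f\|_{L^p}\in L^1(\mu)$ because $\Phi\in L^1(\mu)$. Combined with the strong measurability of $u\mapsto F(u,\cdot)$ — which one obtains from the $\mu$-measurability of the family $(A(u))$ in the compact–open topology (continuity of $u\mapsto A(u)$ on compact sets together with measure preservation yields $L^p$-continuity of $u\mapsto C_{A(u)}g$ for $g$ in a dense subclass, and one passes to general $f$ by approximation, using that $L^p(\Omega)$ is separable) — this shows that $u\mapsto F(u,\cdot)$ is Bochner integrable into $L^p(\Omega)$ and that
$$\Big\| (B)\!\int_T F(u,\cdot)\,d\mu(u)\Big\|_{L^p}\le\int_T\|F(u,\cdot)\|_{L^p}\,d\mu(u)=\|\Phi\|_{L^1(\mu)}\,\|f\|_{L^p}.$$

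It remains to identify this Bochner integral with the pointwise-defined operator, which is exactly the content of Lemma~\ref{lm1} applied with $\mathcal F(\Omega)=L^p(\Omega)$ and $\nu$ the Lebesgue measure on $\Omega$: hypothesis (a) holds because $L^p$-convergence forces a.e.\ convergence of a subsequence, while (b) and (c) were just checked. Hence $\mathcal{H}_{\mu,\Phi,A}f(x)=\big((B)\int_T F(u,\cdot)\,d\mu(u)\big)(x)$ for a.e.\ $x\in\Omega$; in particular $\mathcal{H}_{\mu,\Phi,A}f\in L^p(\Omega)$ and $\|\mathcal{H}_{\mu,\Phi,A}f\|_{L^p}\le\|\Phi\|_{L^1(\mu)}\|f\|_{L^p}$, which is the stated bound. (Once the measurability is settled one may, alternatively, bypass the Bochner formalism and invoke Minkowski's integral inequality directly.)

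The step I expect to be the only real obstacle is the measurability bookkeeping, namely verifying that $u\mapsto f\circ A(u)$ is a strongly measurable $L^p(\Omega)$-valued map starting from $\mu$-measurability of $(A(u))$ with respect to the compact–open topology; everything else is a routine change of variables powered by Lemma~\ref{lem2}, after which the norm estimate is immediate.
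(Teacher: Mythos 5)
Your argument is correct and rests on exactly the same foundation as the paper's: Lemma~\ref{lem2} gives that each $A(u)$ preserves Lebesgue measure, hence $\|f\circ A(u)\|_{L^p}=\|f\|_{L^p}$, and the bound $\|\mathcal{H}_{\mu,\Phi,A}f\|_{L^p}\le\int_T|\Phi(u)|\,\|f\circ A(u)\|_{L^p}\,d\mu(u)=\|\Phi\|_{L^1(\mu)}\|f\|_{L^p}$ follows. Where you differ is in how you pass the norm inside the integral: the paper does this in one line by Minkowski's integral inequality, whereas you set up the Bochner integral of $u\mapsto\Phi(u)\,(f\circ A(u))$ in $L^p(\Omega)$ and invoke Lemma~\ref{lm1} to identify it with the pointwise-defined operator — the route the paper reserves for the harder Sobolev case in Theorem~\ref{Sobolev}. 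Your version is heavier but buys something real: it makes explicit the strong-measurability bookkeeping (via separability of $L^p(\Omega)$ and the measurability of $u\mapsto A(u)$ in the compact--open topology) that the paper's Minkowski application needs implicitly, since Minkowski's integral inequality also presupposes joint measurability of $(u,x)\mapsto\Phi(u)f(A(u)(x))$. Your closing parenthetical — that once measurability is settled one may bypass the Bochner formalism and use Minkowski directly — is precisely the paper's proof. One small caveat: your sketch of $L^p$-continuity of $u\mapsto C_{A(u)}g$ for $g$ in a dense subclass is the least secure step (compact--open convergence of $A(u)$ controls $g\circ A(u)$ on compact sets, but the supports $A(u)^{-1}(\operatorname{supp}g)$ need not stay in a fixed compact set a priori), so if you keep the Bochner route you should flesh that out, e.g.\ by arguing weak measurability first as the paper does in part (c) of the proof of Theorem~\ref{Sobolev}.
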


\begin{proof} Since a composition with an isometry preserves the $L^p$-norm, we have by the Minkovski inequality
$$
\|\mathcal{H}_{\mu, \Phi,A}f\|_{L^p}\le \int_T|\Phi(u)|\|f\circ A(u)\||_{L^p}d\mu(u)= \|\Phi\|_{L^1(\mu)}\|f\|_{L^p}.
$$
\end{proof}

In the following $\Omega$ stands for a  domain in $\mathbb{R}^n$  endowed with the Euclidean metric $\rho$  and Lebesgue measure $d\nu(x)=dx$, and $\rm{Iso}(\Omega)$  denotes the group of bijective isometries of $\Omega$. Every such isometry is  smooth by the Myers--Steenrod theorem \cite{MyersSteenrod}.  Equipped  with the compact-open topology $\rm{Iso}(\Omega)$ becomes a locally compact topological group \cite[p. 46, Theorem 4.7]{KobNom}.  We call a family $(A(u))_{u\in T}\subseteq \rm{Iso}(\Omega)$ measurable if the map $u\mapsto A(u)$ is  measurable as a map between  the measure space $(T,\mu)$ and  the topological space $\rm{Iso}(\Omega)$. We shall wright
$$
A(u)(x)=(a_1(u)(x),\dots,a_n(u)(x))\quad (u\in T, x\in \Omega)
$$
where $a_k(u)\in C^1(\Omega)$  for all $k\in \{1,\dots,n\}, u\in T$ .

Recall  that the Sobolev space $W^{1,p}(\Omega)$ ($1 \le p <\infty$) is the Banach space of functions
$f\in L^p(\Omega)$ with  weak derivatives $\partial_k f:=\frac{\partial f}{\partial y_k}\in L^p(\Omega, \mathbb{R}^n)$ ($k=1,\ldots,n$) equipped with the norm
$$
\|f\||_{W^{1,p}}:=\|f\||_{L^p}+\sum_{k=1}^n\left\|\partial_k f\right\|_{L^p}
$$
(see, e.~g., \cite{Adams} for details). 

According to \cite{Heinonen} a domain $\Omega\subseteq \mathbb{R}^n$ is called {\it a $W^{1,p}$-extension domain} if for every Sobolev 
function $f\in W^{1,p}(\Omega)$ there is a Sobolev function $\tilde f\in W^{1,p}(\mathbb{R}^n)$ such that $ f=\tilde f$
almost everywhere in $\Omega$  and that 
$$
\|\tilde f\|_{W^{1,p}}\le C(\Omega)\|f\|_{W^{1,p}}
$$
where the norm on the left-hand side is the Sobolev norm in $W^{1,p}(\mathbb{R}^n)$ and that on 
the right-hand side is the Sobolev norm in $W^{1,p}(\Omega)$. The whole space $\mathbb{R}^n$ and every its bounded subdomain with 
smooth boundary is an extension domain, but an extension domain can be quite 
complicated. See \cite[Example 8.24 (f)]{Heinonen}.

\begin{theorem}\label{Sobolev} 
Let $T$
 be a $\sigma$-compact quasi-metric space with  Radon
measure $\mu$. Let $\Omega$ be a  $W^{1,p}$-extension domain in $\mathbb{R}^n$,   and a  family  $(A(u))_{u\in T}\subseteq \rm{Iso}(\Omega)$ be measurable.

(i) The Hausdorff-type operator $\mathcal{H}_{\mu, \Phi,A}$  is bounded in the Sobolev space $W^{1,p}(\Omega)$ if  $\Phi\in L^1(\mu)$ ($1<p<\infty$), and  in this case
$\|\mathcal{H}_{\mu,\Phi,A}\|\le c\|\Phi\|_{L^1(\mu)}$ for some positive constant $c=c(\Omega,p)$.

(ii) Let in addition  for some constant $C>0$ and for all $u\in T$ one has $\|\partial a_k(u)(\cdot)/\partial x_j\|_{L^\infty(\Omega)}\le C$ ($k, j\in \{1,\dots,n\}$). Then $\mathcal{H}_{\mu, \Phi,A}$  is bounded in  $W^{1,1}(\Omega)$, $\|\mathcal{H}_{\mu,\Phi,A}\|\le (Cn+1)\|\Phi\|_{L^1(\mu)}$,  and for a function $f(y)$ in  $W^{1,p}(\Omega)$ ($1\le p<\infty$) we have  for all $j\in \{1,\dots,n\}$ 
\begin{align}\label{partial}
\frac{\partial}{\partial x_j}\mathcal{H}_{\mu,\Phi,A}f(x)&=\int_T\Phi(u)\frac{\partial}{\partial x_j}f(A(u)(x))d\mu(u)\\ \nonumber
&=\int_T\Phi(u)\sum_{k=1}^n\frac{\partial f}{\partial y_k}(A(u)(x))\frac{\partial}{\partial x_j}a_k(u)(x)d\mu(u).
\end{align}
Consequently,
\begin{align*}
\nabla \mathcal{H}_{\mu,\Phi,A}f(x)&=\int_T\Phi(u)\nabla  f(A(u)(x))d\mu(u).
\end{align*}

\end{theorem}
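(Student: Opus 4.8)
The plan is to reduce the problem to the Euclidean nature of the isometries and then, for (i), to the difference-quotient description of $W^{1,p}(\mathbb{R}^n)$, and for (ii), to the Bochner-integral mechanism of Lemma~\ref{lm1}. First record some structural facts used throughout. By the Myers--Steenrod theorem each $A(u)$ is smooth, and since $\Omega$ is connected and carries the flat metric (for which local isometries are affine) $A(u)$ is the restriction to $\Omega$ of a rigid motion $R(u)\colon x\mapsto B(u)x+b(u)$ of $\mathbb{R}^n$ with $B(u)\in O(n)$; as $A(u)$ is onto, $R(u)(\Omega)=\Omega$. Thus $a_k(u)(x)=\sum_j B(u)_{kj}x_j+b_k(u)$, so $\|\partial a_k(u)/\partial x_j\|_{L^\infty(\Omega)}=|B(u)_{kj}|\le1$; in particular the extra hypothesis in (ii) always holds with $C=1$. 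By Lemma~\ref{lem2} each $A(u)$ preserves Lebesgue measure, so $g\mapsto g\circ A(u)$ is an isometry of every $L^p(\Omega)$, $u\mapsto R(u)$ is a measurable family of isometries of $\mathbb{R}^n$, and by the Corollary $\|\mathcal{H}_{\mu,\Phi,A}\|_{L^p\to L^p}\le\|\Phi\|_{L^1(\mu)}$.

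For (i) I would use the $W^{1,p}$-extension property to transfer the problem to $\mathbb{R}^n$, where for $1<p<\infty$ one has the equivalence $\|g\|_{W^{1,p}(\mathbb{R}^n)}\asymp\|g\|_{L^p(\mathbb{R}^n)}+\sup_{h\ne0}\|g(\cdot+h)-g\|_{L^p(\mathbb{R}^n)}/|h|$; this is precisely where $p>1$ is needed, since for $p=1$ the right-hand side describes $BV$, not $W^{1,1}$. Given $f\in W^{1,p}(\Omega)$, pick $\tilde f\in W^{1,p}(\mathbb{R}^n)$ with $\tilde f=f$ a.e. on $\Omega$ and $\|\tilde f\|_{W^{1,p}(\mathbb{R}^n)}\le C(\Omega)\|f\|_{W^{1,p}(\Omega)}$. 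Since $R(u)(\Omega)=\Omega$ and $\tilde f=f$ on $\Omega$, for a.e. $x\in\Omega$ one has $\mathcal{H}_{\mu,\Phi,A}f(x)=\int_T\Phi(u)\tilde f(R(u)(x))d\mu(u)=:G(x)$, so it suffices to bound $G$ in $W^{1,p}(\mathbb{R}^n)$ and restrict (note $\|g|_\Omega\|_{W^{1,p}(\Omega)}\le\|g\|_{W^{1,p}(\mathbb{R}^n)}$). Using $R(u)(x+h)=R(u)(x)+B(u)h$, that $g\mapsto g\circ R(u)$ is an $L^p(\mathbb{R}^n)$-isometry, and $|B(u)h|=|h|$, the Minkowski inequality gives $\|G(\cdot+h)-G\|_{L^p}\le\|\Phi\|_{L^1(\mu)}\big(\sup_{h'\ne0}\|\tilde f(\cdot+h')-\tilde f\|_{L^p}/|h'|\big)|h|$; with $\|G\|_{L^p}\le\|\Phi\|_{L^1(\mu)}\|\tilde f\|_{L^p}$ and the equivalence above this yields (i).

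For (ii), and $1\le p<\infty$, I would invoke Lemma~\ref{lm1} twice. Since $A(u)$ is an affine bijection of $\Omega$ onto $\Omega$, the chain rule for Sobolev functions applies: $f\circ A(u)\in W^{1,p}(\Omega)$ with $\partial_j(f\circ A(u))(x)=\sum_k(\partial_k f)(A(u)(x))\,\partial_j a_k(u)(x)$, and with $\|\partial_j a_k(u)\|_{L^\infty}\le C$ and measure preservation $\|f\circ A(u)\|_{W^{1,p}(\Omega)}\le(Cn+1)\|f\|_{W^{1,p}(\Omega)}$ uniformly in $u$. Hence $u\mapsto\Phi(u)(f\circ A(u))$ is a Bochner-integrable $W^{1,p}(\Omega)$-valued map, and Lemma~\ref{lm1} with $\mathcal{F}(\Omega)=W^{1,p}(\Omega)$ identifies $\mathcal{H}_{\mu,\Phi,A}f$ with $(B)\int_T\Phi(u)(f\circ A(u))d\mu(u)$, so $\|\mathcal{H}_{\mu,\Phi,A}\|\le(Cn+1)\|\Phi\|_{L^1(\mu)}$ (in particular the $W^{1,1}$-bound). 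Since $\partial_j\colon W^{1,p}(\Omega)\to L^p(\Omega)$ is bounded linear it commutes with the Bochner integral, giving $\partial_j\mathcal{H}_{\mu,\Phi,A}f=(B)\int_T\Phi(u)\,\partial_j(f\circ A(u))\,d\mu(u)$ in $L^p(\Omega)$; a second application of Lemma~\ref{lm1}, now with $\mathcal{F}(\Omega)=L^p(\Omega)$ and the integrand $u\mapsto\Phi(u)\partial_j(f\circ A(u))$, evaluates this Bochner integral pointwise, and inserting the chain-rule formula produces exactly \eqref{partial}; collecting $j=1,\dots,n$ yields the gradient identity. (Alternatively \eqref{partial} follows by testing $\mathcal{H}_{\mu,\Phi,A}f$ against $\varphi\in C_c^\infty(\Omega)$, interchanging the $T$- and $\Omega$-integrals via Fubini--Tonelli using $\int_{\mathrm{supp}\,\varphi}|(\partial_k f)(A(u)x)|\,dx\le|\mathrm{supp}\,\varphi|^{1/p'}\|\partial_k f\|_{L^p}$, integrating by parts, and interchanging back.)

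The norm estimates and the chain rule for $f\circ A(u)$ are routine (the latter is legitimate on an arbitrary domain precisely because $A(u)$ is the restriction of a global affine bijection). The two delicate points are: in (i), that one must avoid the derivatives of $A(u)$ entirely — only the $L^p$-invariance of composition and the identity $|B(u)h|=|h|$ enter — and that the difference-quotient description of $W^{1,p}$ is the right tool, which is exactly what forces $p>1$; and, throughout, the Banach-space-valued measure theory, i.e. verifying strong measurability of $u\mapsto\Phi(u)(f\circ A(u))$ into $W^{1,p}(\Omega)$ (using separability of $W^{1,p}(\Omega)$ and measurability of the family $(A(u))$) and the ensuing legitimacy of the interchanges of integration.
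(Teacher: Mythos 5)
Your argument is correct, but it takes a genuinely different route from the paper in both parts. For (i) the paper never leaves $\Omega$: it invokes the identification of $W^{1,p}(\Omega)$ with the Haj{\l}asz--Sobolev space $M^{1,p}(\Omega)$ (valid on $W^{1,p}$-extension domains for $p>1$), observes that the Haj{\l}asz norm is \emph{exactly} invariant under composition with a measure-preserving isometry (since $|f(Ax)-f(Ay)|\le\rho(x,y)\bigl(g(Ax)+g(Ay)\bigr)$), and then concludes via the Bochner-integral Lemma~\ref{lm1} and Minkowski; you instead extend $f$ to $\mathbb{R}^n$ and use the difference-quotient characterization of $W^{1,p}(\mathbb{R}^n)$, which requires the additional structural fact that every surjective isometry of a domain is the restriction of a global rigid motion $x\mapsto B(u)x+b(u)$ with $B(u)\in{\rm O}(n)$ --- a fact the paper never states (it only cites Myers--Steenrod for smoothness) but which is true for open connected $\Omega$ and which you should justify explicitly, since your whole reduction rests on it. Both approaches hinge on the extension-domain hypothesis and both fail at $p=1$ for the same underlying reason; yours buys the pleasant by-product that the auxiliary hypothesis of (ii) always holds with $C=1$, while the paper's stays intrinsic and avoids difference-quotient machinery. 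For (ii) the paper proves \eqref{partial} by a direct distributional computation (test against $\varphi\in\mathcal{D}(\Omega)$, two applications of Fubini, then the $L^1$ estimates), whereas your main route runs Lemma~\ref{lm1} twice --- once in $W^{1,p}(\Omega)$ and once in $L^p(\Omega)$ after commuting $\partial_j$ with the Bochner integral; this is a clean and valid alternative (and your parenthetical Fubini variant is essentially the paper's proof). The only points you should not leave implicit are the joint measurability of $(u,x)\mapsto\Phi(u)\tilde f(R(u)x)$ needed for Minkowski's integral inequality in (i), and the verification that $G\vert_\Omega$ agrees a.e.\ with $\mathcal{H}_{\mu,\Phi,A}f$ (a Fubini argument on the null set where $\tilde f\ne f$); both are routine.
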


\begin{proof} (i) Let $\Phi\in L^1(\mu)$.
It is known (see, e.g., \cite[p. 40]{Heinonen})
that $W^{1,p}(\Omega)$ equals the Haj{\l}asz-Sobolev space $M^{1,p}(\Omega)$ in a sense  that 
a function $f\in L^p(\Omega)$ belongs to  $W^{1,p}(\Omega)$  if and only if there exists 
$g\in L^p(\Omega)$ so that
\begin{align}\label{Hajlasz}
|f(x)-f(y)|\le \rho(x,y)(g(x)+g(y))
\end{align}
holds for a.~e. $x, y\in \Omega$ (recall that $\rho$ stands for the Euclidean distance in $\mathbb{R}^n$). 
In this case the following norm
$$
\|f\|_M:=\|f\|_{L^p}+\inf \|g\|_{L^p}
$$
where infimum is taken over all $g\in L^p(\Omega)$ satisfying the defining inequality \eqref{Hajlasz} is comparable with the usual norm in $W^{1,p}(\Omega)$.

 We shall verify the conditions of Lemma \ref{lm1} where  $\mathcal{F}(\Omega)=W^{1,p}(\Omega)$, $F(u,x)=\Phi(u)f(A(u)(x))$. 
 
 (a) Since $\|f\|_{L^p}\le \|f\|_M$, this follows from the well known F. Riesz theorem.
 
 (b) Note that every $A\in \rm{Iso}(\Omega)$ preserve the Lebesgue measure on $\Omega$ by Lemma \ref{lem2}.  For  $f\in W^{1,p}(\Omega)$ and $A\in \rm{Iso}(\Omega)$ we have for a.~e. $x, y\in \Omega$
\begin{align*}
|f(A(x))-f(A(y))|&\le \rho(A(x),A(y))(g(A(x))+g(A(y)))\\ 
&=\rho(x,y)(g_1(x)+g_1(y)),  \nonumber
\end{align*} 
where $g_1:=g\circ A\in  L^p(\Omega)$. Thus, $f\circ A(u)\in  W^{1,p}(\Omega)$ for all $u\in T$ and (b) holds.

(c)  We shall use the criterium of Bochner integfrability (see, e.~g., \cite{Hyt}). The space $W^{1,p}(\Omega)$ is separable  (see, e.~g., \cite[Theorem 3.6]{Adams}). Therefore to verify  that the $W^{1,p}(\Omega)$-valued function $u\mapsto F(u,\cdot)$ is strongly $\mu$-measurable it suffices to prove that the $W^{1,p}(\Omega)$-valued function $u\mapsto f\circ A(u)$  is weakly $\mu$-measurable.
To do this, note that every linear bounded functional $L$ on $W^{1,p}(\Omega)$ has the form
$$
L(f)=\sum_{j=1}^n\int_{\Omega}\frac{\partial}{\partial x_j}f(x)v_j(x)dx
$$
for some  functions $v_j\in L^{p'}(\Omega)$ ($1/p+1/p'=1$) see, e.~g., \cite[Theorem 3.19]{Adams}. Thus, as  a family  $(A(u))_{u\in T}$  is measurable and all  operators $\partial/\partial x_j$ are  continuous on $W^{1,p}(\Omega)$, the function 
$$
u\mapsto L( f\circ A(u))=\sum_{j=1}^n\int_{\Omega}\frac{\partial}{\partial x_j}f(A(u)(x))v_j(x)dx
$$
is measurable, too. Indeed,  for each $x\in \Omega$ the evaluation map $A\mapsto A(x)$, $\rm{Iso}(\Omega)\to \Omega$ is continuous with respect to  the compact-open topology in $\rm{Iso}(\Omega)$, and thus the map $u\mapsto f\circ A(u)(x)$  is measurable as a composition of measurable mappings. 
 
Next, for each  $A\in \rm{Iso}(\Omega)$ and every $f\in W^{1,p}(\Omega)$ we have $f\circ A\in L^{p}(\Omega)$, $\|f\|_{L^p}=\|f\circ A\|_{L^p}$, and
$$
\|f\circ A\|_M:=\|f\circ A\|_{L^p}+\inf \|g_1\|_{L^p}=\|f\|_{L^p}+\inf \|g_1\|_{L^p}
$$
where infimum is taken over all $g_1\in L^p(\Omega)$ satisfying the condition
$$
|f(A(x))-f(A(y))|\le\rho(x,y)(g_1(x)+g_1(y))
$$
for a.~e. $x,y\in \Omega$. Since every such function has the form $g_1=g\circ A$ where $g=g_1\circ A^{-1}$ satisfies the condition \eqref{Hajlasz}, and $\|g_1\|_{L^p}=\|g\|_{L^p}$, we have $\|f\circ A\|_M=\|f\|_M$. This implies (c) due to the criterium of Bochner integfrability,  because $\Phi\in L^1(\Omega)$ and  $\|\Phi(u)f\circ A(u)\|_M=|\Phi(u)|\|f\|_M$ for all $u\in T$.

Thus, by  Lemma \ref{lm1}, 
\begin{equation*}
\mathcal{H}_{\mu, \Phi,A}f =\int_{T} \Phi(u)f\circ A(u)d\mu(u)
\end{equation*}
(the Bochner integral for $W^{1,p}(\Omega)$). 
Therefore
\begin{eqnarray*}
\|\mathcal{H}_{\mu, \Phi,A}f\|_{M}& \le&\int_{T} |\Phi(u)|\|\|f\circ A\|_M\|d\mu(u)\\
&=&\|\Phi\|_{L^1(\mu)}\|f\|_M.
\end{eqnarray*}
Since the norm $\|\cdot\|_M$ is comparable with the usual norm in $W^{1,p}(\Omega)$, the assertion (i) follows.

(ii) Let $1\le p<\infty$.  We shall use  the following special case of the  definition of the  generalized partial derivative of a    locally  integrable function on  $\Omega$. Let $h, g\in L^1_{\rm loc}(\Omega)$.  If
$$
\int_\Omega h(x)\frac{\partial \varphi(x)}{\partial x_j}dx=-\int_\Omega  g(x)\varphi(x)dx \mbox{ for all } \varphi\in \mathcal{D}(\Omega)
$$
where $\mathcal{D}(\Omega)$ is the space of test functions  (indefinitely differential functions with compact support  in $\Omega$) then we say that $\partial h/\partial x_j = g$ (in the 
sense of distributions).

 Now let $\Phi\in L^1(\mu)$ and $ \varphi\in \mathcal{D}(\Omega)$. Then by the Fubuni theorem
 \begin{align}\label{deriv1}
 \int_\Omega (\mathcal{H}_{\mu,\Phi,A}f)(x)\frac{\partial \varphi(x)}{\partial x_j}dx&= \int_\Omega \int_{T}\Phi(u)f(A(u)(x)\frac{\partial \varphi(x)}{\partial x_j}d\mu(u)dx\\ \nonumber
 &= \int_{T} \int_\Omega \Phi(u)f(A(u)(x)\frac{\partial \varphi(x)}{\partial x_j}dxd\mu(u).
 \end{align}
The application of  the Fubuni theorem is justified by the following estimate
$$
 \int_{T} \int_\Omega |\Phi(u)||f(A(u)(x)|\left|\frac{\partial \varphi(x)}{\partial x_j}\right|dxd\mu(u)
 $$
 $$
 \le \left\|\frac{\partial \varphi(x)}{\partial x_j}\right\|_{L^\infty(\Omega)} \int_{T} |\Phi(u)|\int_\Omega |f(A(u)(x)dxd\mu(u)
 $$
 $$
 =\left\|\frac{\partial \varphi(x)}{\partial x_j}\right\|_{L^\infty(\Omega)} \|f\|_{L^1(\Omega)}\||\Phi\|_{L^1(\mu)}<\infty.
 $$
Since

$$
 \int_{T}  |\Phi(u)| \int_\Omega \sum_{k=1}^n\left|\frac{\partial f(A(u)(x))}{\partial y_k}\right| \left|\frac{\partial a_k(u)(x))}{\partial x_j}\right||\varphi(x)|dxd\mu(u)
$$
\begin{align}\label{estim}
&\le C  \|\varphi \|_{L^\infty(\Omega)} |\Phi\|_{L^1(\mu)}\sum_{k=1}^n \int_\Omega\left|\frac{\partial f(A(u)(x))}{\partial y_k}\right|dx\\ \nonumber
&\le C  \|\varphi \|_{L^\infty(\Omega)} \|\Phi\|_{L^1(\mu)}\|f\|_{W^{1,p}}<\infty,
\end{align}
we deduce from \eqref{deriv1} (again  by the Fubuni theorem) that for all $ \varphi\in \mathcal{D}(\Omega)$
$$
 \int_\Omega (\mathcal{H}_{\mu,\Phi,A}f)(x)\frac{\partial \varphi(x)}{\partial x_j}dx=
$$
$$
 \int_\Omega\left(-\int_T \Phi(u)\sum_{k=1}^n\frac{\partial f(A(u)(x))}{\partial y_k}\frac{\partial a_k(u)(x))}{\partial x_j}d\mu(u)\right)\varphi(x)dx,
$$
which proves \eqref{partial}.

To finish the proof first note that
\begin{align*}
\|\mathcal{H}_{\mu,\Phi,A}f\||_{L^{1}}&\le \int_T |\Phi(u)|\|f\circ A(u)\|_{L^{1}}d\mu(u)\\
&=\|\Phi\|_{L^1(\mu)}\|f\|_{L^{1}}\le \|\Phi\|_{L^1(\mu)}\|f\|_{W^{1,1}}.
\end{align*}
Similarly, formula \eqref{partial}  implies that for all $j$
\begin{align*}
\|\partial_j \mathcal{H}_{\mu,\Phi,A}f\||_{L^{1}}&\le C\int_T |\Phi(u)|\sum_{k=1}^n\|(\partial_k f)\circ A(u)\|_{L^{1}}d\mu(u)\\
&= C\|\Phi\|_{L^1(\mu)}\sum_{k=1}^n\|\partial_k f\|_{L^{1}}\le C \|\Phi\|_{L^1(\mu)}\|f\|_{W^{1,1}}.
\end{align*}

Therefore
\begin{align*}
\|\mathcal{H}_{\mu,\Phi,A}f\||_{W^{1,1}}&=\|\mathcal{H}_{\mu,\Phi,A}f\||_{L^1}+\sum_{j=1}^n\left\|\partial_j \mathcal{H}_{\mu,\Phi,A}f\right\|_{L^1}\\
&\le(nC+1)\|\Phi\|_{L^1(\mu)}\|f\|_{W^{1,1}}
\end{align*}
which completes the proof.

\end{proof}

\begin{remark}
{\rm If the domain $\Omega$ is bounded then $1\in W^{1,p}(\Omega)$. So, in this case the condition  $\Phi\in L^1(\mu)$ is necessary for the boundedness of $\mathcal{H}_{\mu,\Phi,A}$ in  
$W^{1,p}(\Omega)$; for the unbounded case see Corollary \ref{Rn}.  
}
\end{remark}

\begin{remark}
{\rm
One can replace the condition on $\Omega$ to be a  $W^{1,p}$-extension domain  in the Theorem \ref{Sobolev}  by the condition that $\Omega$  supports a $p$-Poincar\'e inequality in a sense of \cite{HK}. Indeed, in this case the equality $W^{1,p}(\Omega)=M^{1,p}(\Omega)$ remains true by \cite[Corollary 10.2.9 and Theorem 7.4.5]{HK}.
}
\end{remark}

\begin{example} {\rm Let $T={\rm O}(n)$ be the group of real orthogonal $n\times n$  matrices, $A(u)(x)=ux$ ($u\in {\rm O}(n)$,  $x$ is a column vector), $\Phi\equiv 1$, and $\mu$ the normalized Haar measure of the (compact) group ${\rm O}(n)$. In this case, the Hausdorff operator turns into  the following integral transform (the averaging  operator over the action of ${\rm O}(n)$)
$$
\mathcal{A}f(x)=\int_{{\rm O}(n)}f(ux)d\mu(u).
$$
This operator is  bounded in $W^{1,p}(B)$ ($1<p<\infty$) where $B$  is the ball in $\mathbb{R}^n$ (possibly 
with infinite radius so that $B=\mathbb{R}^n$)  by Theorem \ref{Sobolev}(i)    and
$$
\frac{\partial}{\partial x_j}\mathcal{A}f(x)=\int_{{\rm O}(n)}\sum_{k=1}^n\frac{\partial f(ux)}{\partial y_k}u_{kj}d\mu(u),
$$
since in this case  $a_k(u)(x)=\sum_{j=1}^nu_{kj}x_j$ where $u=(u_{kj})$. Moreover, this operator is bounded in $W^{1,1}(B)$ and  $\|\mathcal{A}\|\le n+1$ by Theorem \ref{Sobolev}(ii), because $|u_{kj}|\le 1$ for all $k,j$ for $u=(u_{kj}) \in {\rm O}(n)$.

A similar result is valid for an  averaging  operator
$$
\mathcal{A}_T f(x)=\frac{1}{\mu(T)}\int_{T}f(ux)d\mu(u)
$$
where $T\subset {\rm O}(n)$ stands for a measurable set with $\mu(T)>0$.
}
\end{example}

It is well known that each  Euclidean motion in $\mathbb{R}^n$ has the form $A(x)=Vx+b$, where $V$ is an orthogonal  matrix, $V\in {\rm O}(n)$, and $b\in \mathbb{R}^n$ ($x$ is a column vector). Thus,  the following corollary of Theorem \ref{Sobolev}   is true.

\begin{corollary}\label{Rn} Let $V_u\in {\rm O}(n)$, $b_u\in \mathbb{R}^n$ for all $u\in T$, and a  family  $A(u)(x):=V_ux+b_u$ of isometries of $\mathbb{R}^n$ be  measurable. 

(i) If $\Phi\in L^1(\mu)$ then the operator
\begin{eqnarray}\label{HRn}
 \mathcal{H}_{\mu,\Phi,A}f(x)=\int_{T}\Phi(u)f(V_u x+b_u)d\mu(u)
 \end{eqnarray}
  is bounded in $W^{1,p}(\mathbb{R}^n)$ ($1<p<\infty$), and $\|\mathcal{H}_{\mu,\Phi,A}\|\le c\|\Phi\|_{L^1(\mu)}$ for some positive constant $c=c(n,p)$. Moreover, this operator is bounded in $W^{1,1}(\mathbb{R}^n)$, and in this case $\| \mathcal{H}_{\mu,\Phi,A}\|\le (n+1)\|\Phi\|_{L^1(\mu)}$.
  
(ii)  If the range of the map $u\mapsto b_u$ is bounded in $ \mathbb{R}^n$  the condition $\Phi\in L^1(\mu)$  is necessary for the boundedness of $\mathcal{H}_{\mu,\Phi,A}$ in  
$W^{1,p}(\mathbb{R}^n)$.
\end{corollary}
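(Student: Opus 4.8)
The plan is to deduce part (i) as a direct specialization of Theorem~\ref{Sobolev}, and to prove part (ii) by testing the operator on smooth, compactly supported functions that equal $1$ on a large ball.

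For part (i), observe that $\mathbb{R}^n$ is a $W^{1,p}$-extension domain and that each map $A(u)\colon x\mapsto V_ux+b_u$ is a surjective isometry of $\mathbb{R}^n$ (these are exactly the Euclidean motions), so the hypotheses of Theorem~\ref{Sobolev}(i) hold and give boundedness of $\mathcal{H}_{\mu,\Phi,A}$ on $W^{1,p}(\mathbb{R}^n)$ for $1<p<\infty$ together with the bound $\|\mathcal{H}_{\mu,\Phi,A}\|\le c(n,p)\|\Phi\|_{L^1(\mu)}$. For the endpoint $p=1$ I would check the additional hypothesis of Theorem~\ref{Sobolev}(ii): here $a_k(u)(x)=\sum_{j=1}^n (V_u)_{kj}x_j+(b_u)_k$, hence $\partial a_k(u)(x)/\partial x_j=(V_u)_{kj}$, and since the rows of an orthogonal matrix are unit vectors we have $|(V_u)_{kj}|\le 1$ for all $k,j$ and all $u\in T$. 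Thus Theorem~\ref{Sobolev}(ii) applies with $C=1$ and yields boundedness on $W^{1,1}(\mathbb{R}^n)$ with $\|\mathcal{H}_{\mu,\Phi,A}\|\le(n+1)\|\Phi\|_{L^1(\mu)}$.

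For part (ii), assume $\mathcal{H}_{\mu,\Phi,A}$ is bounded on $W^{1,p}(\mathbb{R}^n)$ and set $M:=\sup_{u\in T}|b_u|$, which is finite by hypothesis (if $\mu$ is the zero measure there is nothing to prove). Fix $R>M$ and choose $f\in\mathcal{D}(\mathbb{R}^n)$ with $f\equiv 1$ on the closed ball $\overline{B(0,R)}$; then $f\in W^{1,p}(\mathbb{R}^n)$, so $\mathcal{H}_{\mu,\Phi,A}f$ is a well-defined element of $W^{1,p}(\mathbb{R}^n)$ and, by the very definition \eqref{H_mu}, the Lebesgue integral $\int_T\Phi(u)f(V_ux+b_u)\,d\mu(u)$ is absolutely convergent for almost every $x$. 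The elementary point is that for $|x|\le R-M$ and every $u\in T$ one has $|V_ux+b_u|\le|V_ux|+|b_u|=|x|+|b_u|\le R$, since $V_u$ is orthogonal; hence $f(V_ux+b_u)=1$ for all such $x$ and $u$. Choosing one $x$ with $|x|\le R-M$ at which the integral converges absolutely, we obtain
$$
\|\Phi\|_{L^1(\mu)}=\int_T|\Phi(u)|\,d\mu(u)=\int_T|\Phi(u)|\,|f(V_ux+b_u)|\,d\mu(u)<\infty,
$$
that is, $\Phi\in L^1(\mu)$.

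I do not expect a serious obstacle here: part (i) is immediate once the bound $|(V_u)_{kj}|\le 1$ is recorded, and part (ii) only exploits that composing with a uniformly bounded family of Euclidean motions keeps the ball $B(0,R-M)$ inside the fixed ball $B(0,R)$, so that a $W^{1,p}$ cut-off of the constant $1$ plays exactly the role that $1$ itself plays for bounded domains (cf. the preceding remark). The only point deserving care in the write-up is the observation that boundedness of $\mathcal{H}_{\mu,\Phi,A}$ already forces $\mathcal{H}_{\mu,\Phi,A}f$ to be defined for a.e. $x$, i.e. the defining Lebesgue integral to be absolutely convergent a.e.; this is what upgrades ``the integral $\int_T\Phi\,d\mu$ exists'' to the genuine conclusion $\int_T|\Phi|\,d\mu<\infty$.
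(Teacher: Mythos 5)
Your proof is correct and follows essentially the same route as the paper: part (i) is the identical specialization of Theorem \ref{Sobolev} with $C=1$ coming from $|(V_u)_{kj}|\le 1$, and part (ii) rests on the same mechanism of testing against a single $W^{1,p}$ function whose composition with every $A(u)$ is bounded below by a positive constant at some fixed $x$, so that absolute convergence of the defining Lebesgue integral forces $\Phi\in L^1(\mu)$. The only cosmetic difference is the choice of test function: the paper uses the Gaussian $e^{-\|y\|^2}$, which yields the uniform lower bound $e^{-2(\|x\|^2+C_1^2)}$ at every $x$, whereas you use a smooth cutoff equal to $1$ on a ball of radius $R>\sup_u\|b_u\|$ and restrict to $\|x\|\le R-M$; both work equally well.
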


\begin{proof} (i) This follows from the previous theorem (since  $V_u\in {\rm O}(n)$, one can choose $C=1$ in this theorem as in  Example 1).

(ii)  Let $\|b_u\|\le C_1$  for all $u\in T$ and  let  $f(y)=e^{-\|y\|^2}$.  Then $f\in W^{1,p}(\mathbb{R}^n)$ and 
\begin{align*}
f(V_ux+b_u)&=e^{-\|V_ux+b_u\|^2}\ge e^{-2(\|V_ux\|^2+\|b_u\|^2)}\\
&= e^{-2(\|x\|^2+\|b_u\|^2)}\ge e^{-2(\|x\|^2+C_1^2)}
\end{align*} 
for all $u\in T$. Therefore the convergence of a Lebesgue integral  in \eqref{HRn} implies
$$
\infty>\int_{T}|\Phi(u)|f(V_u x+b_u)d\mu(u)\ge e^{-2(\|x\|^2+C_1^2)}\int_{T}|\Phi(u)|d\mu(u),
$$
and thus $\Phi\in L^1(\mu)$.
\end{proof}

\

\section{acknowledgments}
The author is partially supported by the State Program of Scientific Research
of Republic of Belarus, project No. 20211776,
 and by the Ministry of Education and Science of Russia,  agreement No. 075-02-2023-924.

\

\section{Data availability statement}
This article has no additional data.
This work does not have any conflicts of interest.

\section{Disclosure Statement}
The author confirms that there are no relevant financial or non-financial competing interests to report.

Department of Mathematics and Programming Technologies, Francisk Skorina Gomel State University, Gomel, 246019, Belarus $\&$ Regional Mathematical Center, Southern Federal
University, Rostov-on-Don, 344090, Russia.

\

Declaration of AI use. I have not used AI-assisted technologies in creating this article.


\begin{thebibliography}{99}

\bibitem{Adams}
 R.A. Adams, J.J.F. Fournier,  Sobolev spaces, 2ed., Elsevier (2003).


\bibitem{BM}
G. Brown,  F. M\'{o}ricz, Multivariate Hausdorff operators on the spaces $L^p(\mathbb{R}^n),$
J. Math. Anal. Appl., 271, 443--454  (2002).




\bibitem{CFW}
J. Chen, D. Fan, S. Wang, Hausdorff operators on Euclidean space (a survey article), Appl. Math. J. Chinese Univ. Ser. B (4), 28,  548--564 (2014)


\bibitem{KGM}
  S. Grudsky, A. Karapetyants,  A. R. Mirotin,   Estimates for singular numbers of Hausdorff-Zhu operators
and applications, Math. Meth. Appl., Sci.  46, no. 8, 
1--18  (2023). DOI: 10.1002/mma.9080



\bibitem{H}
 G. H.  Hardy, Divergent Series, Oxford,  Clarendon Press (1949).

 \bibitem{Heinonen}
J. Heinonen,  Lectures on analysis on metric spaces, Springer-Verlag, Ney-York (2001)
 
\bibitem{HK}
J. Heinonen,  P. Koskela, N. Shanmugalingam, J. T. Tyson, Sobolev Spaces on Metric Measure Spaces. An Approach Based on Upper Gradients, Cambridge University Press, Cambridge (2015).


\bibitem{Hyt}
T. Hyt$\ddot{\rm o}$nen, J. van Neerven,
M, Veraar,  L. Weis, Analysis in Banach Spaces, Vol. I: Martingales and Littlewood-Paley Theory, Springer (2016).



\bibitem{KM} 
A.  Karapetyants,    A. R. Mirotin,  A class of Hausdorff-Zhu operators, Analysis and Mathematical Physics,  13, no. 3,  1--19 (2022).
https://doi.org/10.1007/s13324-022-00681-x



\bibitem{KobNom}
S. Kobayshi, K. Nomizu,  Foundations of differential geometry, Vol.1, Wiley (1996).



\bibitem{LL}
A. Lerner,  E. Liflyand, Multidimensional Hausdorff operators on the real
Hardy space, J. Austr. Math. Soc., 83, 7--86  (2007).


\bibitem{Ls}
E. Liflyand, Hausdorff operators on Hardy spaces, Eurasian Math.  J.,
no. 4,  101 -- 141  (2013).


\bibitem{LM}
E. Liflyand, F. M\'{o}ricz, The Hausdorff operator is bounded on the real Hardy
space $H^1(\mathbb{R})$,  Proc. Am. Math. Soc., 128, 1391 -- 1396 (2000).


\bibitem{MyersSteenrod}
 S. B. Myers,  N. E. Steenrod, The group of isometries of a Riemannian manifold, Ann. of Math., 2, 40 (2): 400--416  (1939),  doi:10.2307/1968928

\bibitem{MCAOT} 
  A. R. Mirotin,   Hausdorff Operators on Some Spaces of Holomorphic
Functions on the Unit Disc, Complex Anal. Oper. Theory,  15, no. 9 (85) (2021). https://doi.org/10.1007/s11785-021-01128-0


\bibitem{JMAA}
A. R. Mirotin, Boundedness of Hausdorff operators on
Hardy spaces $H^1$ over locally compact groups,  J. Math. Anal. Appl., 473, 519 -- 533  (2019). DOI:  10.1016/j.jmaa.2018.12.065. Preprint arXiv:1808.08257v4

\bibitem{Nachr}
A. R. Mirotin,  Hausdorff operators on compact Abelian groups, Math. Nachr.,  296, 4108--4124 (2023).  DOI: 10.1002/mana.202200068

\bibitem{homogen}
A. R. Mirotin,   Hausdorff Operators on real Hardy Spaces $H^1$ Over Homogeneous Spaces with Locally Doubling Property,   Analysis Math., 47, no. 2, 385--403  (2021).
DOI: 10.1007/s10476-021-0087-5

\bibitem{Lie}
A. R. Mirotin,    Boundedness of Hausdorff Operators on Hardy Spaces
over Homogeneous Spaces of Lie Groups, Journal of Lie Theory,
 31,  1015--1024 (2021).
 
 
\bibitem{JOTH} 
A. R. Mirotin,     Hausdorff Operators Over  Double  Coset Spaces of  Groups
 with Locally Doubling Property, Journal of Mathematical Sciences,  Ser. A,  266, no. 6, 933--943 (2023)
https://doi.org/10.1007/s10958-022-06174-3


\bibitem{arx} 
A. R. Mirotin, On a general concept of a Hausdorff-type operator (2023), arXiv:2308.02388v1.


\bibitem{Zeleny}
M. Zelen\'y, The Dynkin system generated by balls in $R^d$
contains all Borel
sets, Proc. Amer. Math. Soc., 128, 433--437  (2000).



\bibitem{ZhaoGuo}
G. Zhao, W. Guo,   Hausdorff operators on Sobolev spaces $W^{k,1}$, Integral Transforms and Special Functions, 30, no. 2, 97--111 (2019), DOI: 10.1080/10652469.2018.1537271

\end{thebibliography}
\end{document}